\documentclass[12pt,leqno]{amsart}
\usepackage{amsmath,amssymb,amsfonts}
\usepackage{eucal,enumerate}
\usepackage[all]{xy}
\usepackage{graphicx,psfrag}
\usepackage[utf8]{inputenc}	
\usepackage{mathrsfs}	

\pagestyle{plain}                     
\setlength{\textwidth}{6.5in}
\setlength{\oddsidemargin}{0.0in}
\setlength{\evensidemargin}{0.0in}
\setlength{\textheight}{9in}
\setlength{\topmargin}{-.5in}

\newtheorem{theorem}{Theorem}
\newtheorem{lemma}[theorem]{Lemma}
\newtheorem{corollary}[theorem]{Corollary}

\theoremstyle{definition}
\newtheorem{construct}{Construction}

\newcommand\cA{\mathcal A}
\newcommand\cD{\mathcal D}
\newcommand\G\Gamma
\renewcommand\L\Lambda
\renewcommand\S\Sigma
\newcommand\s\sigma

\begin{document}

{\large
\section*{
Consistency in the Naturally Vertex-Signed Line Graph\\ of a Signed Graph
}}

\begin{center}
Thomas Zaslavsky
\\[10pt]
Department of Mathematical Sciences \\
Binghamton University (SUNY) \\
Binghamton, NY 13902-6000, U.S.A.
\\
Email:  {\tt zaslav@math.binghamton.edu}
\\[10pt]
\end{center}

\begin{center}
Dedicated to a great man, Dr.\ B.\ Devadas Acharya (1947--2013)
\end{center}

{\small
\begin{quote}
\textbf{Abstract.}  
A \emph{signed graph} is a graph whose edges are signed.  In a \emph{vertex-signed graph} the vertices are signed.  The latter is called \emph{consistent} if the product of signs in every circle is positive.  The line graph of a signed graph is naturally vertex-signed.  Based on a characterization by Acharya, Acharya, and Sinha in 2009, we give constructions for the signed simple graphs whose naturally vertex-signed line graph is consistent.

\textbf{Mathematics Subject Classification (2010)}:  Primary 05C22; Secondary 05C75.

\textbf{Keywords}:  Signed graph, line graph, consistent vertex-signed graph, consistent marked graph.
\end{quote}
}

\section{Introduction}

A \emph{signed graph} $\S=(\G,\s)$ consists of a graph $\G=(V,E)$, called its \emph{underlying graph}, and a sign function $\s: E\to\{+1,-1\}$.  The most essential question about a signed graph is whether it is \emph{balanced}, that is, the product of the edge signs in every circle (cycle, polygon, circuit) is positive.  Signed graphs were introduced by Harary \cite{NB}.  The vertex analog is a \emph{vertex-signed graph} (often called a \emph{marked graph}) in which the vertices are signed; the vertex analog of balance is \emph{consistency}, the property that in every circle the product of vertex signs is positive.  These analogs were introduced by Beineke and Harary \cite{BH}.  

As the edges of a simple graph $\G$ become the vertices of its line graph, if $\G$ is signed by $\s$ then $L(\G)$ naturally has its vertices signed by $\s$; it is a vertex-signed graph, which we call the \emph{naturally vertex-signed line graph} of $\S$ and denote by $\L_\s(\S)$.  The natural question is then to find a characterization of signed graphs whose naturally vertex-signed line graphs are consistent in the sense of Beineke and Harary.  (Then $\S$ may be called \emph{line consistent} \cite{CLC}.)  This question was taken up by Acharya, Acharya, and Sinha in 2009 \cite{AAS}; their solution was the following theorem, which they proved by means of the characterization of consistent vertex-signed graphs due to Hoede \cite{H}.  
(Slilaty and Zaslavsky \cite{CLC} simplify the theorem and give a short proof.)
The degree of a vertex is $d(v)$; the \emph{negative degree} $d^-(v)$ is the number of incident negative edges.

\begin{theorem}[{Characterization from \cite[Theorem 2.1]{AAS}}]\label{T:aas}
Assume $\S$ is a signed simple graph.  Then $\L_\s(\S)$ is consistent if and only if\/ $\S$ satisfies both the following conditions:
\begin{enumerate}[Property 1. ]
\item $\S$ is balanced.
\label{Prop1}
\item For each vertex $v\in V$, 
\label{Prop2}
\begin{enumerate}[{\rm(a)}]
\item if $d(v)>3$, then $d^-(v)=0$;
\item if $d(v)=3$, then $d^-(v)=0$ or $2$;
\item if $d^-(v)=2$ and $v$ lies on a circle, then both negative edges at $v$ belong to that circle.
\end{enumerate}
\end{enumerate}
\end{theorem}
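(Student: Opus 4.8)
The plan is to work straight from the definition: $\L_\s(\S)$ is consistent if and only if every circle of $L(\G)$ carries an even number of negatively signed vertices, that is, an even number of negative edges of $\S$ among the edges of $\G$ composing that circle. I would begin by recording the two simplest families of circles in $L(\G)$. First, the image of any circle $C$ of $\G$: its edges, taken in order, form a circle of $L(\G)$ whose vertex signs are precisely the edge signs of $C$. Second, for any vertex $v$ with $d(v)\ge 3$, any three edges at $v$ are pairwise adjacent and so form a triangle in $L(\G)$, which I will call a \emph{star triangle}.

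For necessity I would read each condition off a suitable circle. The star triangles at $v$ force every triple of edges at $v$ to contain an even number of negatives; a short case analysis on $d(v)$ and $d^-(v)$ yields exactly Properties 2(a) and 2(b). The circle-images force every circle of $\G$ to have positive edge-sign product, which is Property 1. Finally, given balance, I would obtain Property 2(c) by a \emph{detour}: if $v$ lies on a circle $C$ while a negative edge $f$ at $v$ lies off $C$, then inserting $f$ between the two edges of $C$ at $v$ yields a new circle of $L(\G)$ with vertex set $E(C)\cup\{f\}$, whose sign product is $(\text{sign of }C)\cdot\s(f)=(+1)(-1)=-1$, contradicting consistency. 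Combined with 2(a),(b), this says no negative edge can sit off a circle through one of its endpoints, which is precisely 2(c).

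For sufficiency the essential device is a decomposition of an arbitrary circle $\hat C=(f_1,\dots,f_m)$ of $L(\G)$. Writing $v_i$ for the vertex at which $f_i$ meets $f_{i+1}$ (indices mod $m$), I call $f_i$ a \emph{spur} when $v_{i-1}=v_i$, so that $f_{i-1},f_i,f_{i+1}$ share a vertex, and a \emph{link} otherwise. Deleting the spur steps from the closed walk of meeting vertices leaves a closed trail traversing each link edge once, so the link edges form an even subgraph of $\G$, which by balance carries an even number of negatives; it remains to show the spurs contribute evenly. Triangles form a base case: a star triangle has an even number of negatives by 2(a),(b), and a triangle arising from $\G$ has no spurs. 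For $m\ge4$ I would show every spur is positive. A spur $f_i$ is based at a vertex $b$ with $d(b)\ge3$; if $d(b)\ge4$ then $d^-(b)=0$ by 2(a), and $d(b)=3$ with $d^-(b)=0$ is immediate, so suppose $d(b)=3$, $d^-(b)=2$. Then $f_{i-1},f_i,f_{i+1}$ are the three edges at $b$, and since $m\ge4$ a fourth edge at $b$ would be needed to make either neighbor a spur, so $f_{i-1},f_{i+1}$ are links. Hence $b$ has degree $2$ in the even link-subgraph and lies on a circle $C^*$ of $\G$ using $f_{i-1},f_{i+1}$; by 2(c) both negative edges at $b$ lie on $C^*$, forcing the spur $f_i$ to be the positive edge.

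I expect the sufficiency direction to be the main obstacle, for a structural reason worth flagging. Unlike balance, consistency is \emph{not} a linear functional on the binary cycle space of $L(\G)$: the set of vertices lying on a circle, and hence its sign product, changes unpredictably under symmetric difference, so one cannot simply check the condition on a spanning set of circles. The spur/link decomposition is the way around this; it isolates the even-subgraph part, where balance does the work, from the spurs, where the local degree conditions do. The most delicate point is the last step—manufacturing an honest circle $C^*$ of $\G$ through a spur's base $b$ so that 2(c) applies—which I would justify using the facts that every edge of an even subgraph lies on a circle and that $b$ has degree exactly $2$ there; I would also be careful with the short-circle degeneracy $m=3$, where a spur's neighbors may again be spurs, which is exactly why triangles must be treated as a separate base case.
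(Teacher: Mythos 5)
Your proof is correct, but there is nothing in the paper to compare it against: Theorem \ref{T:aas} is not proved here at all. It is imported from \cite[Theorem 2.1]{AAS}, where the original proof goes through Hoede's characterization of consistent marked graphs \cite{H}, and the paper points to \cite{CLC} for a simplified statement and short proof; the paper's own contribution (Theorem \ref{T:cons}) takes Theorem \ref{T:aas} as a black box. Your argument is therefore a genuinely independent, self-contained route, and it holds up under checking. Necessity is the routine half and you do it the standard way: star triangles force Property \ref{Prop2}(a) and (b), circles of $\G$ viewed as circles of $L(\G)$ force Property \ref{Prop1}, and the detour circle on $E(C)\cup\{f\}$ forces (in fact a strengthening of) Property \ref{Prop2}(c). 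The substance is sufficiency, where consistency cannot be checked on a spanning set of circles because, as you rightly flag, it is not linear over the binary cycle space of $L(\G)$ — this is precisely the obstruction Hoede's criterion exists to handle, and your spur/link decomposition circumvents it. The delicate steps are all sound: for $m\geq 4$ the links form the edge set of a closed trail, hence an even subgraph whose negative edges are even in number by balance (via the decomposition into edge-disjoint circles); and in the one nontrivial spur case, $d(b)=3$ and $d^-(b)=2$, the hypothesis $m\geq 4$ is exactly what forces both neighbors $f_{i-1},f_{i+1}$ to be links (otherwise $f_{i-2}$ or $f_{i+2}$ would be a fourth edge at $b$), so $b$ has degree exactly $2$ in the even subgraph, the unique circle $C^*$ of the decomposition through $b$ uses $f_{i-1}$ and $f_{i+1}$, and Property \ref{Prop2}(c) applied to $C^*$ makes the spur the positive third edge. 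Treating $m=3$ separately (star triangles by \ref{Prop2}(a),(b); triangles of $\G$ by balance) is necessary, since the fourth-edge argument genuinely fails there, and you do it. As for what each approach buys: the route in \cite{AAS} is short because it leans on Hoede's nontrivial theorem, whereas yours is longer but entirely elementary, closer in spirit to the direct proof the paper attributes to \cite{CLC}.
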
 

One interprets Property \ref{Prop2}(c) to mean that the two negative edges lie on every circle through $v$; thus (as observed in \cite{CLC}) the third edge at $v$ (if it exists) is positive and is an isthmus of $\S$.  

Theorem \ref{T:aas} is surprisingly simple when applied to signed blocks.

\begin{corollary}[\cite{BSG}]\label{C:2conn}
Let $\S$ be a signed simple, $2$-connected graph.  Then $\L_\s(\S)$ is consistent if and only if\/ $\S$ is balanced and all endpoints of negative edges have degree at most $2$.
\end{corollary}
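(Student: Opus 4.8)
The plan is to derive the corollary directly from Theorem~\ref{T:aas} by showing that, under the hypothesis of $2$-connectedness, the somewhat delicate Property~2 collapses into the single condition that every endpoint of a negative edge has degree at most $2$. Property~1 is literally the assertion of balance, so it transfers verbatim; thus the whole content of the corollary lies in matching Property~2 with the degree bound. The structural fact about $\S$ that I would rely on is that a $2$-connected graph (having at least three vertices) is bridgeless, and consequently every vertex lies on a circle, so the hypothesis ``$v$ lies on a circle'' in Property~2(c) is automatically in force.

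For the forward implication, suppose $\L_\s(\S)$ is consistent, so both Properties hold, and let $v$ be an endpoint of a negative edge, i.e.\ $d^-(v)\ge 1$. If $d(v)>3$, Property~2(a) forces $d^-(v)=0$, a contradiction. If $d(v)=3$, Property~2(b) forces $d^-(v)\in\{0,2\}$, hence $d^-(v)=2$, so $v$ carries two negative edges and one positive edge $e$. By the interpretation of Property~2(c) recorded just after Theorem~\ref{T:aas}, the two negative edges then lie on every circle through $v$, so no circle through $v$ uses $e$; equivalently $e$ is an isthmus. This contradicts the bridgelessness of the $2$-connected graph $\S$. Hence $d(v)\le 2$, which is exactly the asserted degree bound.

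For the converse, suppose $\S$ is balanced and every endpoint of a negative edge has degree at most $2$; I would verify Property~2 vertex by vertex. If $d^-(v)=0$, parts (a) and (b) hold on the nose and (c) is vacuous. If $d^-(v)\ge 1$, then $v$ is a negative endpoint, so $d(v)\le 2$; this makes the hypotheses of (a) and (b) false, so they hold vacuously, and in (c) the only possibility is $d(v)=2$ with both incident edges negative, in which case any circle through $v$ necessarily traverses both of them. Thus Property~2 holds throughout, and Theorem~\ref{T:aas} yields consistency of $\L_\s(\S)$.

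The one place that requires genuine care---the main obstacle---is the degree-$3$, negative-degree-$2$ vertex in the forward direction: Properties 2(a) and 2(b) by themselves still permit such a vertex, and it is only Property~2(c), combined with the fact that $2$-connectivity forbids isthmuses, that excludes it. Everything else reduces to routine checking of vacuous or immediate cases.
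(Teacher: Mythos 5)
Your proof is correct and follows exactly the route the paper intends: the paper states Corollary~\ref{C:2conn} as an immediate consequence of Theorem~\ref{T:aas} (with the observation, recorded right after the theorem, that Property~\ref{Prop2}(c) forces the third edge at a trivalent vertex with two negative edges to be an isthmus, which $2$-connectivity forbids), and your argument supplies precisely that derivation, including the correct vacuous-case checking in the converse.
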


In this paper we build upon Theorem \ref{T:aas} by providing constructive characterizations of signed graphs that satisfy Property \ref{Prop2} and thereby of those signed simple graphs that are line consistent.  

\section{Definitions}

We need definitions about graphs, some of which are unusual.  We denote an edge with endpoints $u$ and $v$ by $uv$, even if there exist other edges with the same endpoint.  (We can do that here because we do not treat parallel edges explicitly.)  
A loop contributes 2 to the degree of its vertex.  
A \emph{block} of a graph is a maximal subgraph $B$ such that any two edges in $B$ belong to a circle together.  (An isolated vertex, an isthmus, and a loop are blocks.)  A \emph{nontrivial block} is a block that contains a circle; if the graph is simple it is a block of order three or more.   A \emph{megablock} of a graph is a maximal connected union of one or more nontrivial blocks.  

A path may be open or closed (unlike the usual definition that excludes closed paths); it joins two \emph{termini}, which are equal in the case of a closed path.  An open path may have length 0; it is \emph{nontrivial} if it has positive length.  A closed path must have positive length.  The \emph{internal vertices} of a path are the vertices other than its termini.  The \emph{terminal edges} of a path are its edges that are incident with its termini.  
A circle is the graph of a closed path; the difference between a closed path and a circle is that a closed path has a terminus; a circle has no terminus and all its vertices are internal.

For a subgraph $\G' \subseteq \G$ define a \emph{$\G'$-divalent} path or circle to be a nontrivial path or circle in $\G'$ whose internal vertices have $d_{\G'}(v)=2$.  

In a signed graph the \emph{sign} of a path or circle is the product of the signs of its edges.  The \emph{negative subgraph} of $\S$ is the (unsigned) graph $\S^-$ that has all the vertices and all the negative edges of $\S$.  

By \emph{suppressing a divalent vertex} $v$ in a graph, we mean replacing $v$ and its incident edges $uv$, $vw$ by a single edge $uw$.  When we suppress a divalent vertex in a signed graph, we give the new edge $uw$ the sign $\s(uw) := \s(uv)\s(vw)$.  It is clear that, when suppressing several divalent vertices in a path or circle, the order in which we suppress them does not affect the result.  There is one kind of divalent vertex that cannot be suppressed: a divalent vertex that supports a loop.  By \emph{suppressing all possible divalent vertices} we mean suppressing divalent vertices until the only ones remaining are those that support loops.

Property \ref{Prop1}, balance, is well characterized by the first theorem of signed graph theory.  A \emph{cut} is the set of edges with one endpoint in some subset $X\subset V$ and the other endpoint in $V \setminus X$, provided there is at least one such edge (since the empty set is not a cut).

\begin{theorem}[Harary \cite{NB}]\label{T:balance}
A signed graph is balanced if and only if its set of negative edges is empty or a cut.
\end{theorem}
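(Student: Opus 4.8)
The plan is to prove both directions of the biconditional by exploiting the relationship between balance and the structure of the negative subgraph $\S^-$. For the forward direction, suppose $\S$ is balanced. By definition, the product of edge signs around every circle is positive. I would recall the standard \emph{switching} operation: choosing a function $\theta\colon V\to\signed$ and replacing each sign $\s(uv)$ by $\theta(u)\s(uv)\theta(v)$ preserves the sign of every circle, since the endpoint factors cancel in pairs around any closed walk. The key structural fact is that $\S$ is balanced if and only if it switches to the all-positive signing. First I would establish this switching characterization (or invoke it as the standard first fact of the theory), and then extract the cut from the switching function: given $\theta$ making every edge positive, set $X = \{v : \theta(v) = -1\}$. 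An edge $uv$ is negative in $\S$ precisely when $\theta(u)\s(uv)\theta(v) = +1$ forces $\s(uv) = \theta(u)\theta(v)$, i.e.\ $\theta(u)\neq\theta(v)$, which happens exactly when the edge crosses between $X$ and $V\setminus X$. Hence the negative edges form the cut $[X, V\setminus X]$ (or the empty set, when $\theta$ is constant).

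For the reverse direction, suppose the negative edge set is empty or equals a cut $[X, V\setminus X]$ for some $X\subseteq V$. If it is empty the graph is all-positive and trivially balanced, so assume it is a cut. I would define $\theta(v) = -1$ for $v\in X$ and $\theta(v) = +1$ otherwise, and check that switching by $\theta$ makes every edge positive: an edge inside $X$ or inside $V\setminus X$ is positive and keeps a product $\theta(u)\theta(v) = +1$ of equal signs, while an edge crossing the cut is negative and gets multiplied by $\theta(u)\theta(v) = -1$, yielding $+1$. Thus $\S$ switches to all-positive and is therefore balanced, since the all-positive graph is balanced and switching preserves circle signs.

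The main conceptual obstacle is pinning down and justifying the switching-balance equivalence cleanly, since everything downstream reduces to a bookkeeping exercise on the parity of crossings. The essential combinatorial content is the observation that a closed walk crosses any cut $[X, V\setminus X]$ an even number of times: starting and ending at the same vertex, the number of transitions from $X$ to $V\setminus X$ must equal the number of transitions back, so the total count of crossing edges traversed is even. This immediately gives that the product of signs around a circle in a graph whose negative edges form a cut is $(-1)^{(\text{number of crossings})} = +1$, which is balance; one could present this crossing-parity argument directly instead of routing through switching, and it would serve as the honest core of both implications. I expect the only real care needed is to handle the degenerate cases (empty negative set, and circles that may repeat vertices) so that the parity argument applies without exception.
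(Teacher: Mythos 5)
The paper does not prove this statement at all---it is Harary's classical theorem, cited from \cite{NB} and used as a black box---so your attempt must stand on its own. Your reverse direction does: the crossing-parity argument (a circle, being a closed walk, crosses any cut $[X, V\setminus X]$ an even number of times, so if the negative edges are exactly the cut edges then every circle has an even number of negative edges and is positive) is the complete and standard proof that a cut of negative edges forces balance, and your switching computation for that direction is equivalent bookkeeping.

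The gap is in the forward direction. Everything there hangs on the claim that a balanced $\S$ switches to the all-positive signing, which you propose to ``establish \dots\ or invoke as the standard first fact of the theory.'' You cannot invoke it: by exactly the computation you yourself perform ($\s(uv)=\theta(u)\theta(v)$ iff $uv$ crosses between $\theta^{-1}(-1)$ and $\theta^{-1}(+1)$), the statement ``balanced iff switchable to all-positive'' is interderivable with Harary's theorem by trivial translation, so citing it is circular, and ``establishing'' it is precisely the content your write-up omits. The missing idea is the potential-function construction: in each component fix a root $r$, take a spanning tree, define $\theta(v)$ as the sign of the tree path from $r$ to $v$; then $\s(uv)=\theta(u)\theta(v)$ holds for tree edges by construction and for non-tree edges because the fundamental circle of $uv$ is positive by balance; setting $X=\theta^{-1}(-1)$ then exhibits the negative edge set as empty or the cut $[X,V\setminus X]$. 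Relatedly, your closing remark that the crossing-parity observation ``would serve as the honest core of both implications'' is not correct: parity yields only the implication (cut $\Rightarrow$ balanced); the converse genuinely requires constructing the bipartition from balance, which is a different argument and is where the theorem's real work lives.
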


\section{Constructions}

We present four (more precisely, two and two halves) constructions that enforce Property \ref{Prop2}.  The first construction is the simplest.  The second has a smaller initial graph and reveals more about line-graph consistency.  The third is a variant of the second in which the signs are chosen late rather than early, and the fourth is a special case of the second in which balance is assured through the process of construction.

\begin{construct}\label{CA}
Let $\G$ be a graph.  Choose any set $\cA$ of pairwise disjoint paths and circles in $\G$ such that for each one, $P$, either 
\begin{enumerate}[(i)]
\item $P$ is a $B$-divalent path in a nontrivial block $B$ of $\G$, every vertex of $P$ has $d_\G(v)\leq3$, and its termini have $d_\G(v)=2$ 
(note that no vertex of $P$ can be a cutpoint of a megablock subgraph as any such cutpoint has degree at least 4; also, the third edge at each trivalent vertex in $P$ will be an isthmus of $\G$); or  
\item $P$ is a path consisting of isthmi, every vertex of $P$ has $d_\G(v)\leq3$, and its termini have $d_\G(v)\leq2$ 
(note that all edges adjacent to $P$ are isthmi; for if such an edge $uv$, with $u$ in $P$ but not an terminus, belonged to a nontrivial block $B$, $u$ would have degree at least 2 in $B$ and 2 in $P$, thereby having $d_\G(u)\geq4$; similarly, a terminus $u$ would have $d_\G(u)\geq3$); or 
\item $P$ is a megablock of $\G$ that is a circle and every vertex of $P$ has $d_\G(v)\leq3$ 
(note that any third edge will be an isthmus).
\end{enumerate}
Make the edges of the chosen paths and circles negative and the remaining edges positive.
\end{construct}

The purpose of Construction \ref{CB} is to clarify the structure of a line-consistent signed graph by starting with a smaller graph which is signed and enlarged so as to satisfy Property \ref{Prop2}.  

\begin{construct}\label{CB}
Let $\G'$ be a graph with no divalent vertices except those, if any, that support a loop.  
\emph{Subdividing} an edge means replacing it by a nontrivial path with the same termini or (if it is a loop) a circle; in particular, retaining the edge is considered a subdivision of that edge.  
We construct a graph $\G$ by subdividing $\G'$ and then we sign the edges of $\G$ to get $\S$.
\begin{enumerate}[Step 1.]
\item Sign $\G'$ arbitrarily.  Call this signed graph $\S'$.
\label{BS'}
\item Choose a subset $F'$ of edges in $\G'$ such that $F' \supseteq E^-(\G')$.
\label{BF'}
\item Partition the edges of $F'$ into nontrivial paths $P'$ and circles $C'$, so that 
  \begin{enumerate}[ (a)]
\item each path or circle has internal vertices of degree at most 3, 
\item the third edge at each trivalent internal vertex is an isthmus,  
\item each open path is either contained within a nontrivial block or composed entirely of isthmi (note that a closed path or a circle can only lie within a nontrivial block), and 
\item no terminus is divalent in $\G'$. 
  \end{enumerate}
Let $\cD'$ be the set of these paths and circles.  This partitioning can always be done; at worst, each edge is its own path, or circle if a loop component.  
(Note that (d) only forces a loop component in $F'$ to be a circle in $\cD'$.)
\label{BD'}
\item Subdivide each edge $e' \in E'$ into a path $P_{e'}$.  The resulting graph is $\G$.  Write $P$ for the path or circle that results from subdividing the edges of $P' \in \cD'$ and let $\cD$ be the set of paths and circles $P$ derived from all $P' \in \cD'$.
\label{Bsubdiv}
\item Sign $P_{e'}$ all positive if $e' \notin F'$.
\label{Bpos}
\item For each path or circle $P'\in\cD'$, sign the edges of $P$ so that for each edge $f'$ in $P'$, 
  \begin{enumerate}[ (a)]
\item $\s(P_{f'}) = \s'(f')$,
\item $P_{f'}$ is not all positive, 
\item a terminal edge of $P$ that is incident with a non-univalent terminus is positive, and 
\item any edge of $P$ that is incident to a trivalent internal vertex is negative. 
  \end{enumerate}
\label{Bpath}
\end{enumerate}
\end{construct}

It is sufficient in Step \ref{Bpath}(b) to assume that $P$ is not all positive.  (If some $P_{f'} \subset P$ were all positive, it would be impossible to satisfy Step \ref{Bpath}(d) at both termini of $P_{f'}$.)

Construction \ref{CC} is a variant of Construction \ref{CB} in which the signs of the initial graph $\G'$ are not specified until the end.

\begin{construct}\label{CC}
Carry out Construction \ref{CB} but omit Steps \ref{BS'} and \ref{Bpath}(a); also, in Step \ref{BF'} the choice of $F'$ is arbitrary.  After the construction of $\S$, form $\S'$ by defining $\s'(e'):=\s(P_{e'})$ for each edge $e' \in E'$.
(Note that since $\G$ in Construction \ref{CC} is the same as in Construction \ref{CB}, it follows that $\S'$ in Construction \ref{CC} is the same as in Construction \ref{CB}.)
\end{construct}

Construction \ref{CD} differs from \ref{CB} by inserting balance at the beginning; thus its constructs necessarily have Properties \ref{Prop1} and \ref{Prop2}, though they need not be simple graphs.

\begin{construct}\label{CD}
This is the same as Construction \ref{CB} except that in Step \ref{BS'}, each nontrivial block of $\G'$ is signed so it is balanced (but otherwise arbitrarily).  By Theorem \ref{T:balance}, to do that either make all edges in the block positive or choose a cut in the block and make it negative, the other edges being positive. 
\end{construct}

\section{Results}

Here is our theorem.

\begin{theorem}[Constructive Characterization]\label{T:cons}
Assume $\S$ is a signed graph, not necessarily simple.  
\begin{enumerate}[{\rm(a)}]
\item  The following properties of a signed graph $\S$ are equivalent:
\label{T:cons-1}
  \begin{enumerate}[{\rm(i)}]
\item  $\S$ satisfies Property \ref{Prop2}.
\item  $\S$ is constructed by Construction \ref{CA}.
\item  $\S$ is constructed by Construction \ref{CB}.
\item  $\S$ is constructed by Construction \ref{CC}.
  \end{enumerate}
\item  The following properties of a signed simple graph $\S$ are equivalent:
\label{T:cons-2}
  \begin{enumerate}[{\rm(i)}]
\item $\L_\s(\S)$ is consistent.
\item $\S$ is constructed by Construction \ref{CA} (or \ref{CB} or \ref{CC}) and is balanced.
\item $\S$ is constructed by Construction \ref{CD}.
  \end{enumerate}
\end{enumerate}
\end{theorem}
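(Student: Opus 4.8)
The plan is to deduce part~\ref{T:cons-2} from part~\ref{T:cons-1} together with Theorems~\ref{T:aas} and~\ref{T:balance}, and to prove part~\ref{T:cons-1} by establishing (i)$\Leftrightarrow$(ii) directly and then (i)$\Leftrightarrow$(iii)$\Leftrightarrow$(iv) through suppressing and restoring divalent vertices. The structural fact underlying everything is that Property~\ref{Prop2} forces $d^-(v)\le 2$ at every vertex: since $d^-(v)\le d(v)$, parts~(a) and~(b) of Property~\ref{Prop2} exclude $d^-(v)\ge 3$. Hence $\S^-$ has maximum degree at most $2$, so \emph{$\S^-$ is a disjoint union of paths and circles}; this is the backbone of the argument.

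For (i)$\Leftrightarrow$(ii) I would let $\cA$ be the set of nontrivial components of $\S^-$ and classify each by the isthmus reading of Property~\ref{Prop2}(c). An internal vertex of such a component has $d^-=2$, hence $d\in\{2,3\}$, and when $d=3$ its third edge is a positive isthmus; in particular it cannot be a cutpoint joining two nontrivial blocks, since such a cutpoint has degree at least~$4$. Therefore each component lies wholly in one nontrivial block, or consists wholly of isthmi, or is a circle forming its own megablock, matching the three cases of Construction~\ref{CA}; moreover a terminus has $d^-=1$, which by Property~\ref{Prop2}(b) is incompatible with $d=3$, so a terminus has degree~$2$ in the block case and degree~$\le 2$ in the isthmus case, exactly as Construction~\ref{CA} demands. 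The converse is a direct check: in each case every internal vertex receives $d^-=2$ with a positive isthmus as third edge, every terminus receives $d^-\le1$ at a vertex of degree~$\le2$, and all remaining vertices receive $d^-=0$, so Property~\ref{Prop2} holds.

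For (iii)$\Rightarrow$(i) I would verify Property~\ref{Prop2} vertex by vertex in the graph $\G$ built by Construction~\ref{CB}: every divalent vertex (a subdivision point or a loop support) satisfies parts~(a),(b) vacuously and part~(c) by Step~\ref{Bpath}; an original vertex of degree $>3$ has $d^-=0$ because its incident terminal edges are positive by Step~\ref{Bpath}(c) and its remaining incident paths are positive by Step~\ref{Bpos}; and an original trivalent vertex interior to some path of $\cD'$ gets $d^-=2$ (two negative edges by Step~\ref{Bpath}(d), a third isthmus by Step~\ref{BD'}(b)), while otherwise it gets $d^-=0$. For the reverse direction, (i)$\Rightarrow$(iv), I find it cleanest to target Construction~\ref{CC}: set $\G'=\G$ with all divalent vertices suppressed and $\s'(e'):=\s(P_{e'})$, and place $e'$ in $F'$ exactly when $P_{e'}$ carries a negative edge, so that $F'\supseteq E^-(\G')$ automatically. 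I would then form $\cD'$ by chaining divalent paths through each trivalent vertex at which $d^-=2$ (joining its two negative terminal edges, as Step~\ref{Bpath}(d) requires) and terminating every path at all other vertices of $\G'$ (where the terminal edges are positive, as Step~\ref{Bpath}(c) requires). Because suppression sends an isthmus of $\G$ to an isthmus of $\G'$, the third edge at each chaining vertex becomes an isthmus, giving Step~\ref{BD'}(b); internal vertices then have degree~$3$, and the open/closed dichotomy of Step~\ref{BD'}(c) is inherited from the block-versus-isthmus classification found above. The equivalence (iii)$\Leftrightarrow$(iv) is then routine: a Construction~\ref{CB} output is a Construction~\ref{CC} output with the same $F'$ and signing, and conversely reading off $\s'(e'):=\s(P_{e'})$ at the end both recovers $F'\supseteq E^-(\G')$ (since every path off $F'$ is all positive) and makes Step~\ref{Bpath}(a) hold by definition. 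Finally, for part~\ref{T:cons-2}: Theorem~\ref{T:aas} gives that $\L_\s(\S)$ is consistent iff $\S$ is balanced and has Property~\ref{Prop2}, which by part~\ref{T:cons-1} is (ii); and since Step~\ref{Bpath}(a) makes every subdivided path carry the sign of its parent edge, each circle of $\G$ has the sign of the corresponding circle of $\G'$, so $\S$ is balanced iff $\S'$ is, which by Theorem~\ref{T:balance} means every nontrivial block of $\S'$ is balanced---precisely the extra condition imposed by Construction~\ref{CD}, giving (ii)$\Leftrightarrow$(iii).

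I expect the main obstacle to be the reverse half of part~\ref{T:cons-1}, namely producing the partition $\cD'$. The delicacy is that suppressing a divalent vertex multiplies its two incident signs, so a negative edge followed by a positive edge suppresses to a negative edge; consequently the negative edges of $\S'$ do \emph{not} track the components of $\S^-$, and one must instead chain divalent paths by their negative \emph{terminal} edges at the trivalent vertices and invoke the isthmus content of Property~\ref{Prop2}(c) to certify Steps~\ref{BD'}(b) and~\ref{BD'}(c). Verifying Step~\ref{BD'}(c)---that each open path of $\cD'$ lies in a single nontrivial block or is composed entirely of isthmi---is the point most in need of care.
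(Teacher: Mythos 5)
Your proposal is correct in substance and rests on the same core machinery as the paper's proof: the observation that Property \ref{Prop2} forces $d^-(v)\le 2$, so that $\S^-$ decomposes into paths and circles; the isthmus reading of Property \ref{Prop2}(c); suppression of divalent vertices to recover $\G'$; and chaining of subdivision paths at trivalent vertices carrying two negative edges. But your routing of the implications is genuinely different. The paper proves the cycle (i)$\implies$(ii)$\implies$(iii)$\implies$(i) together with (iii)$\iff$(iv), entering Construction \ref{CB} \emph{from Construction \ref{CA}} and invoking the Uniqueness Lemma (Lemma \ref{L:unique}) to identify the initial data $\S'$, $F'$, $\cD'$; it also isolates the reformulation of Property \ref{Prop2} as a separate lemma (Lemma \ref{L:prop3}). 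You instead prove (i)$\iff$(ii) in both directions, keep (iii)$\implies$(i), and go straight from Property \ref{Prop2} into Construction \ref{CC} via (i)$\implies$(iv), which requires only \emph{exhibiting} valid initial data, not proving it unique; this lets you bypass Lemma \ref{L:unique} entirely (and rederive the content of Lemma \ref{L:prop3} inline), at the cost of proving the direction (ii)$\implies$(i) that the paper's cycle makes unnecessary. The one step you leave genuinely open is the verification of Step \ref{BD'}(c) for the chained paths of $\cD'$, which --- as you rightly stress --- are not components of $\S^-$, so saying it is ``inherited from the block-versus-isthmus classification found above'' is a gesture rather than a proof. The missing argument is short and you have all its ingredients: at a chaining vertex $v$ (trivalent, $d^-(v)=2$, third edge an isthmus of $\G$, hence of $\G'$), if one of the two chained edges of $\G'$ lies in a nontrivial block $B'$, then $v$ lies in $B'$ and has degree at least $2$ there, and since the isthmus is not in $B'$ the other chained edge must also lie in $B'$; by induction along a path of $\cD'$, its edges are therefore all in one nontrivial block or all isthmi, which is exactly Step \ref{BD'}(c). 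Two minor slips do not affect correctness: Property \ref{Prop2}(c) at divalent vertices holds automatically because any circle through a divalent vertex uses both of its edges, not ``by Step \ref{Bpath}''; and in part (b) the fact that $\S'$ is balanced if and only if all its nontrivial blocks are balanced is not Theorem \ref{T:balance} (Harary's cut characterization) but the elementary fact that every circle lies within a single block, which the paper packages inside Lemma \ref{T:bal}.
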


The proof will appear after some preliminary results.  

\begin{lemma}\label{T:bal}
{\rm(a)}  A signed graph $\S$ is balanced if and only if the signed graph resulting from it by suppressing all possible divalent vertices is balanced.

{\rm(b)}  In particular, a signed graph resulting from Construction \ref{CA} is balanced if and only if, in each nontrivial block $B$ of $\S$, when all possible divalent vertices (with degrees measured in $B$, not in $\S$) are suppressed, the negative edge set becomes empty or a cut.

{\rm(c)}  In Construction \ref{CD}, $\S$ is balanced.
\end{lemma}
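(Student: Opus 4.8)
The plan is to prove (a) first by reducing to a single divalent-vertex suppression, and then to obtain (b) and (c) as applications of (a) together with the block-locality of balance and Harary's criterion (Theorem \ref{T:balance}). For (a), since the definitions record that the order of suppression is immaterial, it suffices to show that suppressing one divalent vertex $v$ that does not support a loop preserves balance; iterating then gives the full statement. Let $v$ have incident edges $uv$ and $vw$, which the suppression replaces by a single edge $uw$ with $\s(uw)=\s(uv)\s(vw)$. I would exhibit a sign-preserving bijection between the circles of $\S$ and those of the suppressed graph: a circle avoiding $v$ is unchanged, while a circle through $v$ necessarily traverses both $uv$ and $vw$ (as $v$ is divalent) and is matched with the circle obtained by replacing the two-edge segment at $v$ by the single edge $uw$. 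Because $\s(uv)\s(vw)=\s(uw)$, the sign of each circle is preserved, and the degenerate cases $u=w$ (where the digon becomes a loop) and a pre-existing edge $uw$ cause no difficulty since parallel edges are permitted. Note also that suppressing $v$ leaves the degrees, and hence the divalency, of all other vertices unchanged, so the iterated process is well defined. Thus $\S$ is balanced exactly when the fully suppressed graph is.

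For (b), I would first recall that balance is block-local: every circle lies within a single block, so $\S$ is balanced if and only if each of its blocks is balanced. Trivial blocks (isolated vertices and isthmi) carry no circles and are automatically balanced, so negative isthmi, such as those produced by case (ii) of Construction \ref{CA}, are correctly ignored by the criterion. For each nontrivial block $B$, applying part (a) to $B$ as a standalone signed graph, with degrees measured in $B$, shows that $B$ is balanced if and only if the graph obtained by suppressing all possible divalent vertices of $B$ is balanced; by Theorem \ref{T:balance} this holds precisely when the resulting negative edge set is empty or a cut. Combining these equivalences over all nontrivial blocks yields (b).

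For (c), the key observation is that $\S$ is obtained from $\S'$ by a sign-preserving subdivision: in Step \ref{Bpath}(a) each path $P_{e'}$ is signed so that $\s(P_{e'})=\s'(e')$, and every other subdivided edge remains positive by Step \ref{Bpos}. Since $\G'$ has no divalent vertices except those supporting a loop, suppressing all possible divalent vertices of $\S$ recovers exactly $\S'$, so part (a) gives that $\S$ is balanced if and only if $\S'$ is. Finally, in Construction \ref{CD} each nontrivial block of $\G'$ is signed to be balanced via Theorem \ref{T:balance} (all edges positive, or a chosen cut made negative), while the trivial blocks are automatically balanced; hence by block-locality $\S'$ is balanced, and therefore so is $\S$.

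I expect the main obstacle to be the careful verification of the circle bijection in part (a), in particular confirming that it behaves correctly on the degenerate configurations, namely a divalent vertex whose two edges share an endpoint and a suppression that creates parallel edges, since everything else reduces to block-locality of balance and the application of Theorem \ref{T:balance}.
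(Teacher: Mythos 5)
Your proposal is correct and follows essentially the same route as the paper: part (a) via a sign-preserving correspondence of circles under suppression (you do it one divalent vertex at a time, the paper suppresses whole $\S$-divalent paths at once, but the idea is identical), part (b) via block-locality of balance plus part (a) and Theorem \ref{T:balance}, and part (c) as a direct consequence of (a) applied to Construction \ref{CD}. Your write-up is in fact more detailed than the paper's, which dismisses (c) as ``a special case'' and leaves the circle correspondence in (a) as ``obvious.''
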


\begin{proof}
For part (a), let $\S'$ be the result of suppressing all possible divalent vertices in $\S$.  Note that a $\S$-divalent path $P$ (in $\S$) and the single edge $e$ it becomes in $\S'$ have the same sign and belong to the same circles, if we identify circles in $\S$ with the circles in $\S'$ that result after suppression.  Those observations make the first assertion obvious.  Part (c) is a special case.

For (b), it is clear that $\S$, resulting from Construction \ref{CA}, is balanced if and only if each block $B$ is balanced.  A trivial block is always balanced.  Let $B'$ be the signed graph resulting from suppression applied to $B$.  By (a), $B$ is balanced if and only if $B'$ is balanced.  Balance of $B'$ is determined using Harary's theorem.
\end{proof}

Note that the suppression in Lemma \ref{T:bal}(a) can produce a graph that is not simple even if $\S$ is simple.

\begin{lemma}\label{L:prop3}
Property \ref{Prop2} of a signed graph $\S$ is equivalent to:  
\begin{enumerate}[Property 1. ] 
\setcounter{enumi}{2}
\item For each vertex $v \in V$, 
\label{Prop3}
\begin{enumerate}[{\rm(a)}]
\item $d^-(v) = 0$, or
\item $1 \leq d^-(v) \leq d(v) \leq 2$, or
\item $d^-(v) = 2$, $d(v) = 3$, and the positive edge at $v$ is an isthmus.
\end{enumerate}
\end{enumerate}
\end{lemma}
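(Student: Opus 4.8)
The plan is to exploit the fact that both Property \ref{Prop2} and Property \ref{Prop3} are conditions imposed independently at each vertex, so it suffices to fix a single vertex $v$ and show that the local clauses of Property \ref{Prop2} hold at $v$ if and only if the local clauses of Property \ref{Prop3} hold at $v$. Throughout I would use the trivial inequality $d^-(v) \leq d(v)$, since the negative edges at $v$ form a subset of all the edges incident with $v$. The whole argument then reduces to matching the list of admissible pairs $(d(v), d^-(v))$ permitted by each property, which I would organize as a case analysis on $d(v)$.

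The extremal cases are immediate. When $d(v) \leq 2$, Property \ref{Prop2}(a) and (b) are silent, and (c) is vacuous here: if $d^-(v) = 2$ then $d(v) = 2$, so both edges at $v$ are negative and necessarily lie on every circle through $v$. On the Property \ref{Prop3} side, $d^-(v) = 0$ gives clause (a) while $d^-(v) \geq 1$ gives $1 \leq d^-(v) \leq d(v) \leq 2$, which is clause (b); so the two properties admit exactly the same configurations, and both implications are routine. When $d(v) > 3$, Property \ref{Prop2}(a) forces $d^-(v) = 0$, which is precisely Property \ref{Prop3}(a), and conversely the only clause of Property \ref{Prop3} compatible with $d(v) > 3$ is (a).

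The real content is the case $d(v) = 3$. Here Property \ref{Prop2}(b) restricts $d^-(v)$ to $\{0, 2\}$, matching Property \ref{Prop3} term for term: $d^-(v) = 0$ is clause (a), $d^-(v) = 2$ is clause (c), and clause (b) is excluded since it requires $d(v) \leq 2$. What remains is to reconcile the circle condition \ref{Prop2}(c) with the isthmus condition in \ref{Prop3}(c), both under the standing hypothesis $d(v) = 3$, $d^-(v) = 2$. I would dispatch this with the equivalence already recorded after Theorem \ref{T:aas}: writing $e$ for the unique positive edge at $v$, the edge $e$ is an isthmus if and only if it lies on no circle, which (since any circle containing $e$ must pass through $v$) is equivalent to saying no circle through $v$ uses $e$, i.e.\ every circle through $v$ contains the two negative edges---exactly Property \ref{Prop2}(c). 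I expect this equivalence to be the main obstacle, and the point demanding care is the sub-case in which $v$ lies on no circle at all: there Property \ref{Prop2}(c) holds vacuously, but then all three edges at $v$ are isthmi, so $e$ is in particular an isthmus and Property \ref{Prop3}(c) still holds. Handling that degenerate situation correctly in both directions is what makes the $d(v)=3$ case the crux of the lemma.
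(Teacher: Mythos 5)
Your proposal is correct and follows essentially the same route as the paper's (much terser) proof: a per-vertex case analysis matching the clauses of Property \ref{Prop2} against those of Property \ref{Prop3}, with the only substantive step being the equivalence, when $d(v)=3$ and $d^-(v)=2$, between Property \ref{Prop2}(c) and the positive edge being an isthmus --- exactly the interpretation the paper records after Theorem \ref{T:aas} and invokes in its proof. Your writeup is in fact more complete than the paper's, since you verify both directions of that equivalence and handle the vacuous sub-case where $v$ lies on no circle, which the paper leaves to the reader.
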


\begin{proof}
Considering all possible cases for $v$ in Property \ref{Prop2}, $d^-(v)$ is at most 2 and when $d^-(v)>0$ then $d(v) \leq d^-(v)+1$.  If $d^-(v)=2$, Property \ref{Prop2}(c) entails that a third edge at $v$ must be an isthmus.

It is easy to check that Property \ref{Prop3} implies each part of Property \ref{Prop2}.
\end{proof}

\begin{lemma}[Uniqueness]\label{L:unique}
The initial graph $\G'$ in Construction \ref{CB} is determined (up to isomorphism) by the resulting unsigned graph $\G$.  

A signed graph can be constructed by Construction \ref{CB} in only one way.  That is, $\S'$, $F'$, and the list $\cD'$ are determined by the resulting signed graph $\S$.
\end{lemma}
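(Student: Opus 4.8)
The plan is to invert Construction \ref{CB} one step at a time, recovering the triple $(\S',F',\cD')$ from $\S$ by purely local observations; the global partition will be forced once all the local pairings are pinned down. I would begin with the first assertion, that $\G'$ is recovered from $\G$. Since $\G'$ has no divalent vertices except loop supporters and $\G$ is obtained from $\G'$ by subdividing edges (Step \ref{Bsubdiv}), every divalent vertex of $\G$ is either an internal vertex of some subdivided path $P_{e'}$ or a degree-$2$ loop supporter of $\G'$ whose loop has become a circle. Subdivision does not change the degree of any original vertex, and the termini of each $P_{e'}$, being endpoints of a non-loop edge (hence not degree-$2$ loop supporters, which carry only their loop), remain non-divalent in $\G$. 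Thus the maximal $\G$-divalent paths are exactly the $P_{e'}$ for non-loop $e'$, and the all-divalent circles are exactly the subdivided loops. Suppressing all possible divalent vertices of $\G$ therefore replaces each $P_{e'}$ by a single edge and each subdivided loop by a loop, returning $\G'$; the only freedom is which vertex of a subdivided-loop circle carries the recovered loop, which accounts for the phrase ``up to isomorphism.''

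Next I would recover $\S'$ and $F'$. By the sign rule for suppression, recorded in the proof of Lemma \ref{T:bal}(a), each $P_{e'}$ collapses to an edge of sign $\s(P_{e'})$, which equals $\s'(e')$ when $e'\in F'$ (Step \ref{Bpath}(a)) and equals $+1=\s'(e')$ when $e'\notin F'$ (Step \ref{Bpos}); hence signed suppression of $\S$ returns exactly $\S'$. For $F'$ I would use the dichotomy between Steps \ref{Bpos} and \ref{Bpath}(b): an edge $e'$ lies in $F'$ if and only if $P_{e'}$ is not all positive. Since the correspondence $e'\leftrightarrow P_{e'}$ is already known from the suppression, this membership test is a local inspection, and $F'$ is determined.

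The main obstacle is the last assertion, that the partition $\cD'$ of $F'$ is forced. Here I would read the sign conditions of Step \ref{Bpath} as recording, at each original vertex $v$ of $\G'$, how the incident $F'$-edges are paired by the members of $\cD'$ through $v$. For an $F'$-edge $e'$ at $v$, call the edge of $P_{e'}$ incident with $v$ its \emph{stub at $v$}. A member of $\cD'$ uses $v$ as an internal vertex only when $d_{\G'}(v)=3$ (by Step \ref{BD'}(a) together with the absence of non-loop divalent vertices; the loop case $d_{\G'}(v)=2$ is treated below), and then Step \ref{Bpath}(d) makes both of its stubs at $v$ negative; whereas any edge that \emph{terminates} a member of $\cD'$ at a non-univalent $v$ has positive stub by Step \ref{Bpath}(c), and any edge outside $F'$ has positive stub by Step \ref{Bpos}. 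Consequently no stub is negative at a vertex of degree $\ge 4$, while at a trivalent $v$ the number of negative stubs is $0$ or $2$; in the latter case the two negative-stub edges must be the consecutive pair of a member of $\cD'$ passing through $v$, since two edges terminating at the common non-univalent vertex $v$ could not both have negative stubs. I would then pair, at each trivalent vertex carrying two negative stubs, the two corresponding edges, and recover the members of $\cD'$ as the maximal chains and closed chains of $F'$-edges under this pairing: an isolated $F'$-edge is a one-edge path, a chain that closes up is a circle, and a chain ending at unpaired stubs is an open path.

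Finally I would dispose of the one case requiring separate bookkeeping, a loop of $\G'$. Such a loop becomes an all-divalent circle of $\G$ and, by the note after Step \ref{BD'}, must constitute a circle of $\cD'$ precisely when it lies in $F'$; this is detected by whether that circle is all positive (Step \ref{Bpos}) or not (Step \ref{Bpath}(b)). Since every step above reads off data already visible in $\S$ and each local choice is forced by the sign conditions, the triple $(\S',F',\cD')$ is uniquely determined, which is the claim. I expect the delicate point to be the verification that the negative-stub pairing is both \emph{complete} (every $F'$-edge is assigned to a member) and \emph{unambiguous} (no competing pairing satisfies Step \ref{Bpath}); both follow from the observation that a negative stub at a non-univalent vertex signals a pass-through and a positive stub a termination, but this equivalence must be checked against each clause of Step \ref{Bpath} in turn.
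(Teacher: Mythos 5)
Your overall strategy coincides with the paper's: recover $\G'$ and $\S'$ by suppressing all possible divalent vertices, recover $F'$ as the set of edges $e'$ with $P_{e'}$ not all positive, and recover the way $F'$-edges link up by reading signs at the original vertices --- two edges of $\S$ at an original vertex are consecutive in a member of $\cD$ exactly when both are negative, because Step \ref{Bpath}(c) forces terminal edges at non-univalent termini to be positive and Step \ref{Bpath}(d) forces both edges at a trivalent internal vertex to be negative, while Step \ref{Bpos} keeps non-$F'$ edges positive. That pairing argument is sound and is essentially the paper's.

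The genuine gap is in how you convert the pairing into the list $\cD'$: you allow only ``one-edge path, circle, or open path,'' and your loop paragraph claims every loop of $\G'$ becomes an all-divalent circle of $\G$ and that a loop in $F'$ must be a circle of $\cD'$. This overlooks \emph{closed paths}, which the construction explicitly permits (see the definitions and the parenthetical note in Step \ref{BD'}(c)). A member whose underlying graph is a circle may be either a circle of $\cD'$ or a closed path with a terminus, and these are different list entries; your rule ``a chain ending at unpaired stubs is an open path'' misnames exactly this case (the two unpaired stub-ends lie at one common vertex, the terminus). Concretely: a loop of $\G'$ at a vertex $v$ with $d_{\G'}(v)\geq4$ does not subdivide to an all-divalent circle, and if it lies in $F'$ it \emph{cannot} be a circle of $\cD'$ (that would make $v$ an internal vertex of degree greater than $3$, violating Step \ref{BD'}(a)), so it must be a closed path; a loop at a trivalent $v$ can be either type, distinguished only by whether its two edges at $v$ are both negative (circle) or both positive (closed path, by Steps \ref{Bpath}(d) and \ref{Bpath}(c)). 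This dichotomy is precisely what the last paragraph of the paper's proof settles: when the graph of $P$ is a circle, $P$ is a closed path or a circle according as it does or does not contain a non-divalent vertex at which both incident edges of $P$ are positive. Your pairing does implicitly contain the needed data (the termini are the vertices with unpaired stubs), so the repair is small, but as written your recovery returns the wrong object in these cases and is internally inconsistent about loops --- your chain rule makes an isolated loop edge of $F'$ a ``one-edge path'' while your final paragraph declares it a circle.
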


\begin{proof}
Both $\G'$ and $\S'$ are obtained by suppressing all possible divalent vertices in $\G$ and $\S$, respectively.  
Similarly, $F'$ consists of all edges of $\S'$ that after subdivision are not all positive.  

To see how those edges associate to form $\cD'$, consider distinct edges $e$ and $f$ in $\S$ incident to a vertex $v$ of $\S'$.  
Let $e'$ and $f'$ be the edges of $\S'$ such that $e \in P_{e'}$ and $f \in P_{f'}$ and assume first that $e' \neq f'$, so $d_\S(v)\geq3$.  The only way $e$ and $f$ can be in the same path or circle of $\cD$ is when both are negative; and if that is the case, then neither can be a terminal edge so they must be in the same path or circle.  
Now assume that $e'=f'$; then that edge is a loop in $\S'$ and $P'$ is a closed path or a circle consisting of that loop and its supporting vertex $v$.

If the graph of $P$ is a circle, we need to decide whether $P$ is a closed path or a circle.  It must be a circle if it has only divalent vertices.  Otherwise, only a vertex of $P$ that is trivalent and at which the incident edges of $P$ are both positive can be a terminus. Thus, $P$ is a closed path or a circle according as such a vertex in $P$ exists or does not.
  
Since the elements of $\cD'$ are determined by $\S$, the proposition is proved.
\end{proof}

\begin{proof}[Proof of Theorem \ref{T:cons}\eqref{T:cons-1}]
We may substitute Property \ref{Prop3} for Property \ref{Prop2} in (i).

(i)$\implies$(ii):  Suppose $\S$ is a signed graph that satisfies Property \ref{Prop3}.  
Since $d^-(v)\leq2$ for every vertex, the components of $\S^-$ are paths (possibly trivial) and circles.

In a circle component $P$ of $\S^-$, every vertex has $d_\G(v)\leq3$ and, by Property \ref{Prop3}(c), if there is a third edge at $v$, it is an isthmus.  Hence $P$ is a megablock, so it is an instance of Construction \ref{CA}(iii). 

Take a path or circle component $P$ of $\S^-$ that has an edge $f$ in a nontrivial block $B$.  Consider (if one exists) an edge $g$ of $P$ that is adjacent to $f$, say at vertex $v$.  Since $d_\G(v)\leq3$ by Property \ref{Prop3}(b, c), either $d_\G(v)=2$ and both edges at $v$ are in $B$, or $d_\G(v)=3$.  In the latter case the third edge, $h$, is an isthmus by Property \ref{Prop3}(c).  Then $g$ cannot be an isthmus; it must be in $B$.  It follows that all edges of $P$ are in $B$ and $P$ is $B$-divalent.  The termini of $P$ have $d_\G(v)\leq2$ by Property \ref{Prop3}(b), and they must be divalent because no edge of $P$ is an isthmus.  That is, $P$ satisfies Construction \ref{CA}(i).

Finally, consider $P$ that is a path consisting of isthmi.  Its internal vertices have $d_\G(v)\leq3$, and by Property \ref{Prop3}(b) its termini have $d_\G(v)\leq2$.  Hence, it satisfies Construction \ref{CA}(ii).

(ii)$\implies$(iii):  Let $\S_A$ result from Construction \ref{CA}.  To obtain it from Construction \ref{CB} we begin with $\S'$ and $\G'$ obtained by suppressing all possible divalent vertices in $\S_A$ and its underlying graph $\G_A$.  By Lemma \ref{L:unique} those are the proper initial graph and signed graph to get $\S_A$ from Construction \ref{CB}.  As in the proof of that lemma, $F'$ is the set of edges $e'\in E'$ such that $P_{e'}$ is not all positive.  We also define $\cD$ and $\cD'$ as in the proof of Lemma \ref{L:unique}; that is possible because the terminus of a negative path $N \in \cA$ cannot have degree greater than 2, so if it has degree 2 the negative path $N$ can be continued with a positive edge before a vertex of degree greater than 2 is reached.  We need only to verify that $\cD'$ satisfies the assumptions of Construction \ref{CB}.

Clearly, $\cD'$ partitions $F'$ into nontrivial paths and circles.  Let $P'$ be one such path or circle, corresponding to $P \in \cD$, and let $v$ be an internal vertex of $P'$.  The edges $e, f \in P$ incident with $v$ are both negative, by the construction of $\cD$.  Hence, they belong to a path $P_A$ in Construction \ref{CA} and therefore $d_{\G'}(v)=d_{\G_A}(v)\leq3$ and a third edge, if there is one, is an isthmus of $\G_A$, thus of $\G'$.  Moreover, $e$ and $f$ are both isthmi or both contained within a nontrivial block of $\G_A$.  Finally, no terminus is divalent in $\G'$ by the construction of $\G'$ and $\cD'$.  Therefore, the requirements of Step \ref{BD'} are satisfied.

The requirements of Step \ref{Bpos} and Step \ref{Bpath}(a, b) are satisfied by the definition of $F'$.  Those of Step \ref{Bpath}(c, d) is satisfied due to the way we constructed $\cD$.  

It follows that the signed graph $\S$ resulting from Construction \ref{CB} given the initial data can be made to agree with $\S_A$; i.e., $\S_A$ is constructible by Construction \ref{CB}.

(iii)$\implies$(i):  Let $\S$ result from Construction \ref{CB}.  We show it has Property \ref{Prop3}.  A \emph{new vertex} of $\S$ is one that results from subdividing edges of $\S'$; any other vertex is an \emph{original vertex}.

Suppose $e$ is a negative edge in some path or circle $P\in\cD$ and is incident with vertex $v$.  If $d_\G(v)\leq2$, $v$ satisfies Property \ref{Prop3}.  Suppose, therefore, that $d_\G(v)\geq3$.  Since $e$ is negative, by Step \ref{Bpath}(c) $v$ cannot be a terminus of $P$.  Hence $d_\G(v)\leq3$ by Step \ref{BD'}(a) and the third edge $f$ incident with $v$ is an isthmus by Step \ref{BD'}(b).  The two edges of $P$ at $v$ are negative by Step \ref{Bpath}(d), so $d^-(v)\geq2$.  The edge $f$ is either terminal in a path $P_1\in\cD$, hence positive by Step \ref{Bpath}(c), or in a subdivision path $P_{f'}$ of an edge $f'\notin F'$, hence positive by Step \ref{Bpos}.  Either way, $d_\S^-(v)\leq2$.  Thus, $\S$ satisfies Property \ref{Prop3}.

(iii)$\iff$(iv):  The note in Construction \ref{CC} explains why Constructions \ref{CB} and \ref{CC} yield the same signed graphs.
\end{proof}

\begin{proof}[Proof of Theorem \ref{T:cons}\eqref{T:cons-2}]
The equivalence of (ii) and (iii) follows from \eqref{T:cons-1}[(ii)$\iff$(iii)] and the fact that $\S$ resulting from Construction \ref{CD} is balanced by Lemma \ref{T:bal}(c).

As for (i), it is equivalent to $\S$'s having Property \ref{Prop2} and being balanced, which is equivalent by \eqref{T:cons-1}[(i)$\iff$(ii)] to $\S$'s resulting from Construction \ref{CA} and being balanced, i.e., (ii).
\end{proof}


\end{document}